\DeclareMathOperator{\Img}{Im}
\DeclareMathOperator{\Ker}{Ker}
\providecommand{\abs}[1]{\lvert#1\rvert}
\newtheorem{theorem}{Theorem}
\newtheorem{prop}{Proposition}
\newtheorem{lemma}{Lemma}
\newtheorem{rem}{Remark}
\begin{document}

\title[Automorphisms of graphs of finite-rank self-adjoint operators]{Automorphisms of graphs  corresponding to conjugacy classes of finite-rank self-adjoint operators}
\author{Mark Pankov, Krzysztof Petelczyc, Mariusz \.Zynel}
\subjclass[2020]{05E18, 47B15, 81P10}

\keywords{Grassmann graph, conjugacy class of finite rank self-adjoint operators,
graph automorphism, preserver problems}
\address{Mark Pankov: Faculty of Mathematics and Computer Science, 
University of Warmia and Mazury, S{\l}oneczna 54, 10-710 Olsztyn, Poland}
\email{pankov@matman.uwm.edu.pl}
\address{Krzysztof Petelczyc, Mariusz \.Zynel: Faculty of Mathematics, University of Bia{\l}ystok, Cio{\l}kowskiego 1 M, 15-245 Bia{\l}ystok, Poland}
\email{kryzpet@math.uwb.edu.pl, mariusz@math.uwb.edu.pl}

\maketitle

\begin{abstract}
We consider the graph whose vertex set is a conjugacy class ${\mathcal C}$ consisting of  finite-rank self-adjoint operators on a complex Hilbert space $H$.
The dimension of $H$ is assumed to be not less than $3$.
In the case when operators from ${\mathcal C}$ have two eigenvalues only, we obtain the Grassmann graph formed by $k$-dimensional subspaces of $H$,
where $k$ is the smallest dimension of 
eigenspaces.
Classical Chow's theorem describes automorphisms of this graph for $k>1$.
Under the assumption that operators from ${\mathcal C}$ have more than two eigenvalues
we show that every automorphism of the graph is induced by a unitary or anti-unitary operator
up to a permutation of eigenspaces with the same dimensions.
In contrast to this result, Chow's theorem states that there are graph automorphisms induced
by semilinear automorphisms not preserving orthogonality if ${\mathcal C}$ is formed by operators with precisely two eigenvalues.
\end{abstract}

\section{Introduction}
The {\it Grassmann graph} is the simple graph whose vertices are $m$-dimensional subspaces of a vector space $V$ (over a field) and 
two such subspaces are adjacent (connected by an edge) if their intersection is $(m-1)$-dimensional.
Chow's theorem \cite{Chow} describes automorphisms of this graph under the assumption that $1<m<\dim V-1$ 
(in the two remaining cases
the Grassmann graph is a complete graph, and thus
every bijective transformation of the vertex set is a graph automorphism).
The theorem states that every automorphism is induced by a semilinear automorphism of $V$
or, only when $\dim V=2m$, by a semilinear isomorphism of $V$ to the dual vector space $V^*$. 
The original version of Chow's theorem concerns Grassmann graphs of finite-dimensional vector spaces,
but the same holds for the infinite-dimensional case (see, for example, \cite[Chapter 2]{Pankov-book2}).

The Grassmannian of $m$-dimensional subspaces of a complex Hilbert space $H$
can be naturally identified with the conjugacy class of rank-$m$ projections.
Note that projections can be characterised as self-adjoint idempotents in the algebra of bounded operators
and play an important role in operator theory and mathematical foundations of quantum mechanics.
By Gleason's theorem \cite{Gleason}, rank-one projections correspond to pure states of quantum mechanical systems.
Classical Wigner's theorem \cite{Wigner} describes symmetries of the space of pure states, 
i.e.\ the conjugacy class of rank-one projections.
Moln\'ar \cite{M1} extended this result  on other conjugacy classes of finite-rank projections.

Two $m$-dimensional subspaces of $H$ are adjacent if and only if the difference of the corresponding projections is an operator of rank $2$, 
i.e.\ of the smallest possible rank (the rank of the difference of two finite-rank self-adjoint operators from the same conjugacy class cannot be equal to $1$).
Chow's theorem reformulated in terms of  projections was successfully 
applied 
to prove some Wigner-type theorems \cite{GeherSemrl, Geher,Pankov1,PV}
(see \cite{Pankov-book2} for a detailed description of the topic).

In \cite{PPZ}, 
the authors extend the 
adjacency relation, described above, 
from conjugacy classes of projections to 
conjugacy classes of finite-rank self-adjoint operators.
By the spectral theorem (its finite-dimensional version) every
finite-rank self-adjoint operator on $H$ is a real linear combination of finite-rank projections whose images are mutually orthogonal
(these images are the eigenspaces of the operator). 
A conjugacy class of such operators is completely determined by 
the spectrum and the dimensions of the eigenspaces.
If the spectrum consists of $k$ eigenvalues, then
the associated graph has $\binom{k}{2}$ different types of adjacency corresponding to pairs of eigenvalues;
in particular, we obtain a Grassmann graph if there are two eigenvalues only.
Let us have a look at the case of more than two eigenvalues.
Some combinatorial properties of the graph (for example, connectedness and cliques) are investigated in \cite{PPZ},
the main result concerns graph automorphisms under the assumption that the dimensions of all eigenspaces are greater than $1$
(in Section 4 we explain why reasonings from \cite{PPZ} do not work for the general case).
In the present paper, we determine graph automorphisms without any assumption.
Using a new approach based on Johnson graphs we show that 
every automorphism is induced by a unitary or anti-unitary operator up to a permutation of eigenspaces with the same dimensions.
For Grassmann graphs (the case of two eigenvalues) the above statement 
fails, as there are graph automorphisms induced
by semilinear automorphisms of $H$ that do not preserve orthogonality.

\section{Result} 

Let $H$ be a complex Hilbert space of dimension not less than $3$. 
The projection on a closed subspace $X\subset H$ will be denoted by $P_X$. 
Two operators $A$ and $B$ on $H$ are (unitary) conjugate if there is a unitary operator $U$ on $H$ such that $B=UAU^*$.
A conjugacy class of finite-rank self-adjoint operators on $H$ is completely determined by 
the spectrum $\sigma=\{a_i\}_{i\in I}$ of operators from this class  and the family $d=\{n_i\}_{i\in I}$,
where $n_i$ 
is
the dimension of eigenspaces corresponding to the eigenvalue $a_i$. 
This conjugacy class will be denoted by ${\mathcal G}(\sigma,d)$. 
Note that the index set $I$ is finite, each $a_i$ is real and at most one of $n_i$ is infinite.
If $n_i$ is infinite, then $a_i=0$, i.e.\ the corresponding eigenspaces are the kernels of operators from ${\mathcal G}(\sigma,d)$.
For every $A\in {\mathcal G}(\sigma,d)$ we have
$$A=\sum_{i\in I}a_{i}P_{X_i},$$
where $\{X_i\}_{i\in I}$ is a collection of mutually orthogonal subspaces whose sum coincides with $H$
and $\dim X_i=n_i$ for all $i\in I$. 

Operators $A,B\in {\mathcal G}(\sigma, d)$
are said to be {\it adjacent} if the following conditions are satisfied:
\begin{enumerate}[({A}1)]
\item the rank of $B-A$ is equal to $2$;
\item $\Img(B-A)$ and $\Ker(B-A)$ are invariant to both $A$ and $B$.
\end{enumerate}
The condition (A1) implies (A2) if $\abs{I}=2$ \cite[Example 2]{PPZ}.
If $\abs{I}\ge 3$, then there are pairs $A,B\in {\mathcal G}(\sigma, d)$ satisfying (A1) and such that (A2) fails \cite[Examples 4, 5]{PPZ}.

There is a geometric interpretation of operator adjacency.
Let $X,Y\subset H$ be closed subspaces of the same finite dimension or the same finite codimension.
We say that $X$ and $Y$ are {\it adjacent} if $X\cap Y$ is a hyperplane in both $X,Y$ or, equivalently, $X,Y$ both are hyperplanes in $X+Y$.
Observe that $X,Y$ are adjacent if and only if $X^{\perp},Y^{\perp}$ are adjacent.
In the case when $I=\{1,2\}$,
operators $A,B\in {\mathcal G}(\sigma, d)$ are adjacent ($A-B$ is of rank $2$) if and only if
the eigenspaces of $A,B$ corresponding to a certain $a_i$ are adjacent; 
the latter immediately implies  that the eigenspaces corresponding to $a_{3-i}$ also are adjacent.
Consider the general case.
Let $X_i$ and $Y_i$ be the eigenspaces of $A$ and $B$ (respectively) corresponding to $a_i$. 
The conditions (A1) and (A2) hold if and only if there are distinct $i,j\in I$ such that the following assertions are fulfilled:
\begin{itemize}
\item $X_i$ and $X_j$ are adjacent to $Y_i$ and $Y_j$, respectively;
\item $X_t=Y_t$ for all $t\in I\setminus \{i,j\}$.
\end{itemize}
In this case, we say that the operators $A,B$ are $(i,j)$-{\it adjacent}.

Let $\Gamma(\sigma,d)$ be the simple graph whose vertex set is  ${\mathcal G}(\sigma, d)$
and two operators are connected by an edge in this graph if they are adjacent. 
This graph is connected \cite[Theorem 2]{PPZ}.
If $\sigma'=\{a'_i\}_{i\in I}$ is a collection of mutually distinct real numbers such that $a'_i=0$ if $n_i$ is infinite,
then the map
$$\sum_{i\in I}a_iP_{X_i}\to \sum_{i\in I}a'_iP_{X_i}$$
is an isomorphism between $\Gamma(\sigma,d)$ and $\Gamma(\sigma',d)$.

Note that when $I=\{1,2\}$ and $n_1$ is finite, 
$\Gamma(\sigma,d)$ is isomorphic to the Grassmann graph of $n_1$-dimensional subspaces of $H$.
If $n_2$ is finite, then $\Gamma(\sigma,d)$ also is isomorphic to the Grassmann graph formed by $n_2$-dimensional subspaces\footnote{
If $H$ 
is finite-dimensional, the orthocomplementation map provides an isomorphism between the Grassmann graphs formed 
by $m$-dimensional and $(\dim H-m)$-dimensional subspaces of $H$.}.

Let $U$ be a unitary or anti-unitary operator on $H$. 
Consider the bijective transformation sending every $A\in {\mathcal G}(\sigma, d)$
to $UAU^{*}$. If $A=\sum_{i\in I}a_{i}P_{X_i}$, then
$$UAU^*=\sum_{i\in I}a_{i}P_{U(X_i)}$$
which shows that this transformation is an automorphism of the graph $\Gamma(\sigma,d)$.

Let $S(d)$ be the group of all 
permutations 
$\delta$ on $I$ satisfying $n_i=n_{\delta(i)}$.
For every $\delta\in S(d)$ and $A=\sum_{i\in I}a_{i}P_{X_i}\in {\mathcal G}(\sigma, d)$
the operator 
$$\delta(A)=\sum_{i\in I}a_{i}P_{X_\delta(i)}$$
also belongs to ${\mathcal G}(\sigma, d)$. 
The transformation sending every $A\in {\mathcal G}(\sigma, d)$
to $\delta(A)$ is an automorphism of $\Gamma(\sigma,d)$.

\begin{theorem}\label{theorem-main}
If\/ $\abs{I}\ge 3$ and
$f$ is an automorphism of\/ 
$\Gamma(\sigma, d)$, then
there are a unitary or anti-unitary operator $U$ on $H$ and a permutation  $\delta\in S(d)$
such that 
$$f(A)=U\delta(A)U^{*}$$
for all $A\in {\mathcal G}(\sigma, d)$.
\end{theorem}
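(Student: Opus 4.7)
The plan is to extract from the graph $\Gamma(\sigma,d)$ two pieces of data: a permutation $\delta\in S(d)$ describing how $f$ relabels eigenvalues, and a unitary or anti-unitary operator $U$ on $H$ whose induced action on eigenspaces realises $f$. I would proceed in three stages, with Johnson graphs (as advertised in the title) entering as the new tool for handling indices $i$ with $n_i=1$, where Chow's theorem is not directly available.

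\emph{Stage 1: extracting $\delta$ from edge-types.}
For distinct $i,j\in I$ fix an orthogonal family $\{X_t\}_{t\neq i,j}$ with orthocomplement $Z$; the operators of $\mathcal{G}(\sigma,d)$ agreeing with this family outside $\{i,j\}$ form a subgraph naturally isomorphic to the Grassmann graph of $n_i$-dimensional subspaces of $Z$ (doubled, when $n_i=n_j$, by the swap of eigenvalues on the two halves). I would characterise the $(i,j)$-type of an edge intrinsically---say, via the sizes of maximal cliques through it and the local structure of its second neighborhood, both of which depend only on the unordered pair $\{n_i,n_j\}$---so that $f$ permutes edge-types and induces a dimension-preserving permutation $\delta\in S(d)$. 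Replacing $f$ by $\delta^{-1}\circ f$, we may assume henceforth that every $(i,j)$-type is preserved.

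\emph{Stage 2: Johnson graphs and eigenspace recovery.}
For an orthogonal decomposition $\mathcal D=\{Y_i\}_{i\in I}$ of $H$ with $\dim Y_i=n_i$, the set $\mathcal A(\mathcal D)=\{\,\sum_{i} a_{\pi(i)}P_{Y_i} : \pi\in S(d)\,\}\subset\mathcal{G}(\sigma,d)$ is the $S(d)$-orbit of any of its members; edges of $\Gamma(\sigma,d)$ inside $\mathcal A(\mathcal D)$ arise exactly when $\pi,\pi'$ differ by a transposition of indices whose eigenspace-dimensions both equal~$1$, giving a Cayley-graph substructure. Working with such sets, together with Johnson-type auxiliary graphs that encode how the $1$-dimensional eigenspaces interact with the Grassmannian fibres from Stage~1, I would show that $f$ sends orthogonal decompositions of the prescribed type to orthogonal decompositions of the same type. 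Equivalently, for each $i\in I$ there is a well-defined map $g_i$ on the Grassmannian of $n_i$-dimensional subspaces of $H$ sending $X_i$ to the $i$-th eigenspace of $f(A)$ for any $A$ with $i$-th eigenspace $X_i$.

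\emph{Stage 3: rigidity and assembly of $U$.}
Each $g_i$ preserves Grassmann adjacency (via $(i,j)$-adjacency for any $j\ne i$) and orthogonality of $n_i$-subspaces, since orthogonality is the condition for two $n_i$-subspaces to coexist as $i$-th eigenspaces with matching complementary eigenspaces. For $i$ with $n_i>1$, Chow's theorem supplies a semilinear automorphism $T_i$ of $H$ inducing $g_i$ (the $H\to H^{*}$ alternative is eliminated by orthogonality), which orthogonality-preservation then upgrades to a unitary or anti-unitary $U_i$, unique up to scalar. Comparing $U_i$ with $U_j$ on the data shared through $(i,j)$-adjacencies forces them to agree as a single $U$; for $i$ with $n_i=1$, $g_i$ acts on the projective space of $H$ and is pinned to $U$ by compatibility with any $j$ for which $U_j=U$ has already been identified. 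The hardest case, and my expected main obstacle, is Stage~2 when several $n_i$ equal $1$: with Chow offering no rigidity on those Grassmannians, everything has to be extracted from Johnson-graph combinatorics together with orthogonality constraints, amounting essentially to a Wigner-type statement for orthogonal frames of $H$---and I expect that, rather than the invocation of Chow itself, to carry the real technical weight of the proof.
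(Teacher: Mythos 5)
Your architecture (edge-type bookkeeping, then per-index eigenspace maps, then rigidity) matches the paper's in outline, but there are two concrete gaps, each at a point you yourself flag as delicate. First, Stage 1 does not deliver a permutation $\delta$ of $I$. What $f$ gives you a priori is a permutation $\tau$ of the $2$-element subsets of $I$ (it sends $(i,j)$-connected components to $(i',j')$-connected components); a permutation of pairs need not be induced by a permutation of indices. The paper's route is to prove that $\tau$ is an automorphism of the Johnson graph $J(I,2)$, and this requires a genuinely non-trivial exchange lemma: for disjoint pairs $\{i,j\}$ and $\{i',j'\}$ the two adjacencies can always be ``commuted'' past a common neighbour, while for overlapping pairs $\{i,j\}$ and $\{j,t\}$ there is an orthogonality obstruction ($X_i$ need not be orthogonal to $Y_t$) that blocks the exchange. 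Your proposed intrinsic invariants (maximal clique sizes, second neighbourhoods) cannot substitute for this: they depend only on $\{n_i,n_j\}$, so they cannot separate $\tau(\{i,j\})$ from $\tau(\{j,t\})$ when dimensions coincide, and when some $n_i=1$ the restriction to an $(i,j)$-component is a complete graph whose clique data does not even determine $n_j$. Moreover, for $\abs{I}=4$ the graph $J(I,2)$ has automorphisms (complementation $J\mapsto I\setminus J$) not induced by any permutation of $I$; the paper needs a separate case analysis to exclude or absorb these, and nothing in your Stage 1 addresses that possibility.

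Second, Stage 3 fails exactly in the case you identify as hardest. When $n_i=1$ the Grassmann graph on $\mathcal{G}_1$ is complete, so Chow gives nothing, and your plan to pin $g_i$ down ``by compatibility with any $j$ for which $U_j=U$ has already been identified'' presupposes some $j$ with $n_j>1$; if all $n_i=1$ (e.g.\ $\abs{I}=3$, $\dim H=3$) no $U$ is ever constructed and the argument collapses. The missing tool is the paper's Proposition~\ref{prop-Chow}: a \emph{pair} of bijections $g$ of $\mathcal{G}_m(H)$ and $h$ of $\mathcal{G}_l(H)$ preserving the inclusion relation in both directions is induced by a single semilinear automorphism, and this works for $m=1$. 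Because $\abs{I}\ge 3$ one can always take a third index $t$, define $h(Z)=f_t(Z^\perp)^\perp$ on $\mathcal{G}_{n-n_t}(H)$, and translate the cross-orthogonality relation ``$X\in\mathcal{G}(i)$, $Y\in\mathcal{G}(t)$, $X\perp Y$'' (which \emph{is} graph-theoretically visible, since orthogonal subspaces attached to \emph{different} indices coexist as eigenspaces of one operator) into the inclusion $X\subset Z$. Note also that your stated reason for orthogonality preservation --- ``orthogonality is the condition for two $n_i$-subspaces to coexist as $i$-th eigenspaces with matching complementary eigenspaces'' --- is not correct as phrased: two subspaces playing the role of the \emph{same} index $i$ in two operators with identical remaining eigenspaces must be equal, not orthogonal; the usable relation is orthogonality across distinct indices.
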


\begin{rem}{\rm
Suppose that $I=\{1,2\}$ and $n_1\le n_2$.
Then $\Gamma(\sigma,d)$ is isomorphic to the Grassmann graph of $n_1$-dimensional subspaces. 
Let $S$ be a semilinear automorphism of $H$, i.e.\ 
$$S(x+y)=S(x)+S(y)$$
for all $x,y\in H$ and there is an automorphism $\alpha$ of the field ${\mathbb C}$ such that
$$S(ax)=\alpha(a)S(x)$$
for all $a\in {\mathbb C}$ and $x\in H$.
The bijective transformation sending every $a_{1}P_{X_1}+a_2P_{X_2}$ from ${\mathcal G}(\sigma,d)$ to
$$a_{1}P_{S(X_1)}+a_2P_{S(X_1)^{\perp}}$$
is a graph automorphism. 
If $n_1=n_2$, then the same holds for the transformation sending every $a_{1}P_{X_1}+a_2P_{X_2}\in {\mathcal G}(\sigma,d)$ to
$$a_{1}P_{S(X_1)^{\perp}}+a_2P_{S(X_1)}$$
(the transposition of the eigenspaces). 
By Chow's theorem \cite{Chow}, there are no other graph automorphisms if $n_1>1$.
In the case when $n_1=1$, any two operators from the conjugacy class are adjacent and every bijective transformation is a graph automorphism.
}\end{rem}

\begin{rem}\upshape
In \cite{PPZ}, Theorem \ref{theorem-main} is proved under 
the additional assumption that $n_i>1$ for all $i\in I$.
It is required due to applied arguments. 
In Section 4, we explain why these arguments fail in the general case.
\end{rem}

\section{Proof of Theorem \ref{theorem-main}}
Throughout this section we assume that $\abs{I}\ge 3$. 

\subsection{One technical result}
For every integer $m$ satisfying $0<m<\dim H$ we denote by ${\mathcal G}_m(H)$ the Grassmannian formed by $m$-dimensional subspaces of $H$. 

\begin{prop}\label{prop-Chow}
Let $m$ and $l$ be integer satisfying $0<m<l<\dim H$. If $g$ and $h$ are bijective transformations of ${\mathcal G}_m(H)$ and ${\mathcal G}_l(H)$
{\rm(}respectively{\rm)} such that for any $X\in {\mathcal G}_m(H)$ and $Y\in {\mathcal G}_l(H)$ we have
$$X\subset Y\;\Longleftrightarrow\; g(X)\subset h(Y),$$
then $g$ and $h$ are induced by the same semilinear automorphism of $H$, i.e.\
there is a semilinear automorphism $S$ of $H$ such that
$$g(X)=S(X)\qquad\text{and}\qquad h(Y)=S(Y)$$
for all $X\in {\mathcal G}_m(H)$ and $Y\in {\mathcal G}_l(H)$.
\end{prop}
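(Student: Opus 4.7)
The plan is to reduce to Chow's theorem on $\mathcal{G}_l(H)$: I will show that $h$ preserves Grassmann adjacency of $l$-subspaces, invoke Chow's theorem to produce a semilinear automorphism $S$ of $H$ with $h(Y)=S(Y)$, and then recover $g=S|_{\mathcal{G}_m(H)}$ from the compatibility condition. The starting observation is that the hypothesis $X\subset Y\iff g(X)\subset h(Y)$ forces, for each $Y\in\mathcal{G}_l(H)$, the restriction $g\colon\mathcal{G}_m(Y)\to\mathcal{G}_m(h(Y))$ to be a bijection, and hence
$$g\bigl(\mathcal{G}_m(Y_1\cap Y_2)\bigr)=\mathcal{G}_m\bigl(h(Y_1)\cap h(Y_2)\bigr)$$
for all $Y_1,Y_2\in\mathcal{G}_l(H)$. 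Combined with the elementary fact that $A\subset B\iff\mathcal{G}_m(A)\subset\mathcal{G}_m(B)$ whenever $\dim A\ge m$, this already implies that strict inclusions among the intersections $Y_i\cap Y_j$ are faithfully tracked through $h$.

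The crucial step is a characterization of adjacency in $\mathcal{G}_l(H)$ using only the $(m,l)$-incidence. I claim that distinct $Y_1,Y_2$ are adjacent if and only if $\dim(Y_1\cap Y_2)\ge m$ and no $Y_3\in\mathcal{G}_l(H)\setminus\{Y_1,Y_2\}$ satisfies $\mathcal{G}_m(Y_1\cap Y_2)\subsetneq\mathcal{G}_m(Y_1\cap Y_3)$. If $Y_1,Y_2$ are adjacent, then $\dim(Y_1\cap Y_2)=l-1$, so any strict enlargement of $Y_1\cap Y_2$ contained in $Y_1$ must equal $Y_1$, forcing $Y_3=Y_1$. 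Conversely, if $Y_1,Y_2$ are non-adjacent but $\dim(Y_1\cap Y_2)\ge m$, pick $v\in Y_1\setminus Y_2$ and extend $(Y_1\cap Y_2)+\mathbb{C}v$ (of dimension at most $l-1$) to an $l$-subspace $Y_3\ne Y_1,Y_2$, which exists because $\dim H>l$; this $Y_3$ witnesses the required strict containment. The remaining case of non-adjacency with $\dim(Y_1\cap Y_2)<m$ is trivial: then $\mathcal{G}_m(Y_1)\cap\mathcal{G}_m(Y_2)=\emptyset$, so $\mathcal{G}_m(h(Y_1)\cap h(Y_2))$ is empty as well and $h(Y_1),h(Y_2)$ cannot be adjacent. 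Since the characterization is manifestly $h$-invariant, $h$ preserves adjacency in $\mathcal{G}_l(H)$.

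Chow's theorem then supplies a semilinear automorphism $S$ of $H$ with $h(Y)=S(Y)$ for all $Y\in\mathcal{G}_l(H)$; the correlation alternative of Chow (possible only when $\dim H=2l$) is ruled out because it would force $g(X)\subset\bigcap_{Y\supset X}h(Y)=\{0\}$, contradicting $\dim g(X)=m\ge 1$. Since the intersection of all $l$-subspaces containing a fixed $X\in\mathcal{G}_m(H)$ equals $X$ (using $\dim H>l$),
$$g(X)=\bigcap_{Y\supset X}h(Y)=S\Bigl(\bigcap_{Y\supset X}Y\Bigr)=S(X),$$
which completes the argument. The hard part is the adjacency characterization together with the exclusion of the correlation alternative; the borderline case $l=\dim H-1$ (where Chow's theorem does not apply directly to $h$, since the adjacency graph on hyperplanes is complete) would be handled by running the symmetric argument on $g$, applying Chow's theorem when $m\ge 2$ or the fundamental theorem of projective geometry via collinearity of triples when $m=1$.
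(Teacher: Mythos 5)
Your argument is essentially correct, and it is genuinely different from what the paper does, because the paper does not prove Proposition~\ref{prop-Chow} at all: it only cites \cite[Proposition 3.4]{Pankov-book1} and asserts that the finite\-/dimensional reasoning carries over. Your self-contained reduction to Chow's theorem holds up at every step I can check: the identity $g\bigl(\mathcal{G}_m(Y_1\cap Y_2)\bigr)=\mathcal{G}_m\bigl(h(Y_1)\cap h(Y_2)\bigr)$ follows from injectivity of $g$ and $\mathcal{G}_m(Y_1)\cap\mathcal{G}_m(Y_2)=\mathcal{G}_m(Y_1\cap Y_2)$; your incidence-theoretic characterization of adjacency in $\mathcal{G}_l(H)$ is valid in both directions (the forward direction because a strict enlargement of an $(l-1)$-dimensional intersection inside $Y_1$ forces $Y_3=Y_1$, the converse because $\dim H>l$ leaves room to build the witness $Y_3$) and is expressed purely in data preserved by $(g,h)$ and $(g^{-1},h^{-1})$; the dual alternative of Chow is correctly killed by $\bigcap_{Y\supset X}h(Y)=\{0\}$; and $g(X)=S(X)$ follows from $g(X)\subset S\bigl(\bigcap_{Y\supset X}Y\bigr)=S(X)$ plus a dimension count. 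The one part you leave as a sketch is the borderline case $l=\dim H-1$, where the graph on $\mathcal{G}_l(H)$ is complete and Chow applied to $h$ is vacuous; be aware that this case genuinely occurs in the paper's application of the proposition (there $l=\dim H-n_t$ and $n_t=1$ is allowed, and even $m=1$ with $\dim H=3$ arises), so it cannot be waved away. Your proposed remedy --- dualize the argument using the stars $\{Y:Y\supset X\}$ to show $g$ preserves adjacency and apply Chow to $g$ when $m\ge 2$, or extract preservation of collinearity (via $\bigcap_{Y\supset p_1+p_2}Y=p_1+p_2$) and invoke the fundamental theorem of projective geometry when $m=1$ --- is the right one and does go through, but it needs to be written out for the proof to be complete.
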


\begin{proof}
In  \cite[Proposition 3.4]{Pankov-book1}, the statement is proved for finite-dimensional vector spaces, 
but the same reasonings work in the general case.
\end{proof}

\subsection{$(i,j)$-connected components}
Let $i$ and $j$ be distinct indices from $I$.
We say that operators $A,B\in {\mathcal G}(\sigma,d)$ are $(i,j)$-{\it connected} if there is a sequence of operators
$$
A=C_{0},C_{1},\dots, C_{m}=B,
$$
where $C_{t-1},C_{t}$ are $(i,j)$-adjacent for all $t\in \{1,\dots,m\}$.
Two operators from ${\mathcal G}(\sigma,d)$ are $(i,j)$-connected if and only if 
for every $p\in I\setminus\{i,j\}$ they have the same eigenspace corresponding to $a_p$
\cite[Lemma 1]{PPZ}.
An {\it $(i,j)$-connected component} of $\Gamma(\sigma,d)$ is a subset of ${\mathcal G}(\sigma,d)$
maximal with respect to the property that any two operators are $(i,j)$-connected.

At least one of $n_i,n_j$ is finite. 
Suppose that $n_i$ is finite  and consider the pair $(\sigma,d)_{-i,+j}$ obtained from $(\sigma,d)$ as follows:
$n_j$ is replaced  by $n_j+n_i$ and $a_i,n_i$ are removed respectively from $\sigma$ and $d$.
This pair defines the conjugacy class of finite-rank self-adjoint operators ${\mathcal G}\bigl((\sigma,d)_{-i,+j}\bigr)$
whose spectrum is $\sigma\setminus \{a_i\}$, the dimension of eigenspaces corresponding to $a_{j}$ is $n_i+n_j$
and the dimensions of eigenspaces corresponding to the remaining $a_t$ are $n_t$.

\begin{rem}{\rm
If $n_i$ is infinite, then $a_i=0$ and, consequently, $a_j\ne 0$. 
Since eigenspaces corresponding to non-zero eigenvalues of finite-rank operators cannot be infinite-dimensional, 
we do not obtain a conjugacy class of finite-rank self-adjoint operators in this case.
}\end{rem}

For every  operator $T\in {\mathcal G}\bigl((\sigma,d)_{-i,+j}\bigr)$ we denote by ${\mathcal G}(T)$
the set of all operators $A\in{\mathcal G}(\sigma, d)$ such that 
$$A=T+(a_i -a_j) P_{X},$$
where $X$ is an $n_i$-dimensional subspace in the eigenspace of $T$ 
corresponding to $a_j$.
In other words, ${\mathcal G}(T)$ stands for 
all $A\in {\mathcal G}(\sigma, d)$ satisfying the following conditions:
\begin{itemize}
\item for every $t\in I\setminus\{i,j\}$ the eigenspaces of $A$ and $T$ corresponding to $a_t$  are coincident;
\item the eigenspace of $T$ corresponding to $a_j$ is  the orthogonal sum of the eigenspaces of $A$ corresponding to $a_i$ and $a_j$.
\end{itemize}
Note that for every $A\in {\mathcal G}(\sigma, d)$ there is a unique $T\in {\mathcal G}\bigl((\sigma,d)_{-i,+j}\bigr)$ 
such that $A\in {\mathcal G}(T)$. 

Now, assume that $n_j$ is also finite and consider the operator $Q\in {\mathcal G}\bigl((\sigma,d)_{-j,+i}\bigr)$ such that 
for every $t\in I\setminus\{i,j\}$ the eigenspaces of $Q$ and $T$ corresponding to $a_t$ are coincident 
and the eigenspace of $Q$ corresponding to $a_i$ coincides with the eigenspace of $T$ corresponding to $a_j$.
Then ${\mathcal G}(T)={\mathcal G}(Q)$.

By \cite[Lemma 2]{PPZ}, 
$$\bigl\{{\mathcal G}(T): T\in {\mathcal G}\bigl((\sigma,d)_{-i,+j}\bigr)\bigr\}$$
is the family of all $(i,j)$-connected components of ${\mathcal G}(\sigma,d)$.

\begin{rem}{\rm
The restriction of the graph $\Gamma(\sigma,d)$ to every $(i,j)$-connected component 
is isomorphic to the Grassmann graph formed by $n_i$-dimensional subspaces of $H'$, 
where $H'$ is a complex Hilbert space of dimension $n_i+n_j$.
}\end{rem}

\begin{lemma}\label{lemma0}
Operators $T,Q\in {\mathcal G}\bigl((\sigma,d)_{-i,+j}\bigr)$ are adjacent if and only if there are adjacent operators $A\in{\mathcal G}(T)$ and $B\in{\mathcal G}(Q)$.
\end{lemma}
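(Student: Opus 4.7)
The plan is to prove both implications by a case analysis on how the pair of indices governing the adjacency interacts with $\{i,j\}$. The key structural input, from the definition of $\mathcal{G}(T)$, is that for $A\in\mathcal{G}(T)$ one has $X_t^A=X_t^T$ for all $t\in I\setminus\{i,j\}$ and $X_j^T=X_i^A\oplus X_j^A$ (orthogonal direct sum), and analogously for $B\in\mathcal{G}(Q)$; here I write $X_t^A$, $X_t^T$, etc., for the respective eigenspaces.

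For the forward direction, assume $T,Q$ are $(p,q)$-adjacent with distinct $p,q\in I\setminus\{i\}$. If $j\notin\{p,q\}$, the $a_j$-eigenspaces of $T$ and $Q$ coincide; I pick any $n_i$-dimensional $X_0$ in this common eigenspace and set $A:=T+(a_i-a_j)P_{X_0}$ and $B:=Q+(a_i-a_j)P_{X_0}$. Then $B-A=Q-T$, and since $X_0$ lies in the joint $a_j$-eigenspace, $P_{X_0}$ preserves both $\Img(Q-T)$ and $\Ker(Q-T)$, so $A,B$ inherit the rank-$2$ and invariance conditions. If instead $j\in\{p,q\}$, say $q=j$, then $X_j^T,X_j^Q$ are adjacent of dimension $n_i+n_j$, so $X_j^T\cap X_j^Q$ has dimension $n_i+n_j-1\ge n_i$; choosing $X_0\subset X_j^T\cap X_j^Q$ of dimension $n_i$ and splitting both $T$ and $Q$ against $X_0$ yields $X_j^A\cap X_j^B=(X_j^T\cap X_j^Q)\ominus X_0$, of dimension $n_j-1$, so $X_j^A,X_j^B$ are adjacent and $A,B$ become $(p,j)$-adjacent.

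For the backward direction, assume $A,B$ are $(r,s)$-adjacent. If $\{r,s\}\cap\{i,j\}=\varnothing$, then $T,Q$ coincide off $\{r,s\}$ and inherit adjacency at both, so they are $(r,s)$-adjacent. If exactly one of $r,s$ lies in $\{i,j\}$, the same orthogonal-splitting identities yield $\dim(X_j^T\cap X_j^Q)=n_i+n_j-1$, giving $(j,s')$-adjacency of $T,Q$, where $s'$ is the non-$\{i,j\}$ index among $r,s$. The sub-case $\{r,s\}=\{i,j\}$ is degenerate: both $X_j^T=X_i^A\oplus X_j^A$ and $X_j^Q=X_i^B\oplus X_j^B$ equal the orthogonal complement in $H$ of the common subspace $\sum_{t\notin\{i,j\}}X_t^A$, so $X_j^T=X_j^Q$ and $T=Q$; the equivalence is thus to be read under the implicit hypothesis $T\neq Q$, which excludes this case.

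The hard part will be the mixed case, where one of the adjacency indices equals $j$. There one must coordinate the choice of a single common $n_i$-dimensional subspace $X_0\subset X_j^T\cap X_j^Q$ so that the orthogonal complements of $X_0$ in $X_j^T$ and in $X_j^Q$ are still adjacent in $H$, and conversely argue that the required codimension-$1$ intersection of $X_j^T$ and $X_j^Q$ actually follows from the $(r,s)$-adjacency of $A,B$. Both verifications ultimately rest on a careful dimension count inside $X_j^T\cap X_j^Q$ combined with the orthogonality built into the eigenspace decompositions of $T$ and $Q$.
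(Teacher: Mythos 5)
Your proof is correct, but note that the paper does not actually prove this lemma itself --- the proof given is just a citation to the first part of Lemma 4 of \cite{PPZ} --- so you are supplying a self-contained argument where the authors defer to a reference. Your case analysis is the natural one and checks out: in the forward direction, when the adjacency indices of $T,Q$ avoid $j$ you transport a common splitting subspace $X_0$ and get $B-A=Q-T$ with $P_{X_0}$ annihilating $\Img(Q-T)$ and fixing $\Ker(Q-T)$ setwise, and when one index equals $j$ the choice of $X_0\subset X_j^T\cap X_j^Q$ (possible since this intersection has dimension $n_i+n_j-1\ge n_i$) yields $(p,j)$-adjacent $A,B$; in the backward direction the identities $X_j^T\cap X_j^Q=X_i^A\oplus\bigl(X_j^A\cap X_j^B\bigr)$, respectively $\bigl(X_i^A\cap X_i^B\bigr)\oplus X_j^A$, give the required codimension-one intersection. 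Two remarks. First, your observation about the degenerate case is genuine: if $A\in{\mathcal G}(T)$ and $B\in{\mathcal G}(Q)$ are $(i,j)$-adjacent then indeed $T=Q$, so the ``if'' direction is literally false unless the lemma is read for distinct $T,Q$; this is consistent with how the lemma is applied in the paper (to transport adjacency along the induced map $f'$ on connected components, where distinctness is automatic). Second, your closing paragraph undersells what you have already done: the ``hard part'' described there is precisely the computation carried out in your two mixed cases, and the only step left implicit is the routine count $\dim\bigl(X_j^T\cap X_j^Q\bigr)=2(n_i+n_j)-\dim\bigl(X_j^T+X_j^Q\bigr)=n_i+n_j-1$ in the backward direction, which follows from the displayed containments together with the orthogonality of $X_i^A$ to $X_j^A+X_j^B$.
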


\begin{proof}
The first part of Lemma 4 in \cite{PPZ}.
\end{proof}

\begin{rem}\label{rem-ad}{\rm
If $T,Q\in {\mathcal G}\bigl((\sigma,d)_{-i,+j}\bigr)$ are $(t,s)$-adjacent for some $t,s\in I\setminus\{i,j\}$,
then any adjacent $A\in{\mathcal G}(T)$ and $B\in{\mathcal G}(Q)$ also are  $(t,s)$-adjacent.
In the case when $T,Q\in {\mathcal G}\bigl((\sigma,d)_{-i,+j}\bigr)$ are $(t,j)$-adjacent for a certain $t\in I\setminus\{i,j\}$,
there are $(t,j)$-adjacent $A\in{\mathcal G}(T)$ and $B\in{\mathcal G}(Q)$ as well as $(t,i)$-adjacent $A'\in{\mathcal G}(T)$ and $B'\in{\mathcal G}(Q)$.
}\end{rem}

\subsection{Relations to automorphisms of the Johnson graph}

Let $f$ be an automorphism of the graph $\Gamma(\sigma,d)$.

\begin{lemma}\label{lemma1}
For any distinct $i,j\in I$ there are distinct $i',j'\in I$
such that $f$ sends every $(i,j)$-connected component to an $(i',j')$-connected component 
and $f^{-1}$ sends every $(i',j')$-connected component to an $(i,j)$-connected component.
Furthermore, we have $n_{i'},n_{j'}>1$ if and only if $n_i,n_j>1$.
\end{lemma}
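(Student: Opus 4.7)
My plan is to characterise $(i,j)$-connected components intrinsically by the graph structure of $\Gamma(\sigma,d)$, and then transport this characterisation through $f$. By the Remark preceding Lemma \ref{lemma0}, the induced subgraph on each $(i,j)$-connected component $\mathcal{G}(T)$ is isomorphic to the Grassmann graph of $n_i$-dimensional subspaces of a Hilbert space of dimension $n_i+n_j$, and in particular it is connected.

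The core step I would carry out is to show that $\mathcal{G}(T)$ is a \emph{maximal} subset of $\mathcal{G}(\sigma,d)$ whose induced subgraph is isomorphic to such a Grassmann graph. Any $A'\in\mathcal{G}(\sigma,d)\setminus\mathcal{G}(T)$ differs from every operator of $\mathcal{G}(T)$ in some eigenspace indexed by $t\notin\{i,j\}$, and a local inspection of the permitted adjacency patterns between $A'$ and vertices of $\mathcal{G}(T)$ — bearing in mind the condition (A2) at indices involving $t$ — should rule out that $\mathcal{G}(T)\cup\{A'\}$ induces a Grassmann graph. Granted this, $f$ must send $\mathcal{G}(T)$ to another maximal subset of the same kind, which is therefore some $(i',j')$-connected component $\mathcal{G}(T')$; matching the Grassmann graph isomorphism types forces $\{n_{i'},n_{j'}\}=\{n_i,n_j\}$. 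The property that the Grassmann graph is complete — which is exactly $\min(n_i,n_j)=1$ — is preserved by graph isomorphism, so the iff statement on $n_{i'},n_{j'}>1$ follows at once.

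For the uniformity of the pair $(i',j')$ across all $(i,j)$-components I would use Lemma \ref{lemma0} together with Remark \ref{rem-ad}: two $(i,j)$-components corresponding to adjacent $T_1,T_2\in\mathcal{G}\bigl((\sigma,d)_{-i,+j}\bigr)$ are linked in $\Gamma(\sigma,d)$ by edges of type $(t,i)$ or $(t,j)$ with $t\notin\{i,j\}$; tracking these linking edges through $f$ — this is where the Johnson-type structure on $\binom{I}{2}$ flagged by the subsection title should enter — forces the target pair to be constant across all $(i,j)$-components. The corresponding statement for $f^{-1}$ follows by applying the whole argument to the inverse automorphism.

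The principal obstacle is the maximality step: ruling out that $\mathcal{G}(T)$ is contained in a strictly larger induced Grassmann subgraph of $\Gamma(\sigma,d)$. This demands a delicate comparison between Grassmann adjacency and the adjacency conditions (A1), (A2), and I would expect that the actual proof may bypass explicit maximality altogether, recognising edge types at each fixed vertex through small subgraph patterns such as triangles, common-neighbour counts, or line-structure inherited from the Grassmann geometry.
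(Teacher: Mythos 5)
There is a genuine gap, and it sits exactly where you flagged it. Note first that the paper does not prove this lemma at all: it is quoted from \cite{PPZ} (Lemma 9 there), so the only thing to assess is whether your plan would work. Your central step is to characterise each $(i,j)$-connected component ${\mathcal G}(T)$ intrinsically as a \emph{maximal} subset of ${\mathcal G}(\sigma,d)$ inducing a Grassmann graph, and this step is both unproved and structurally insufficient. First, checking that no single vertex $A'$ can be adjoined is not the same as maximality: a strictly larger set ${\mathcal M}\supsetneq{\mathcal G}(T)$ inducing a Grassmann graph would not give you any one-point extension ${\mathcal G}(T)\cup\{A'\}$ that is itself a Grassmann graph, so the ``local inspection'' you propose does not establish what you need. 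Second, and more seriously, even if maximality of the components were proved, you would still need the converse classification --- that \emph{every} maximal subset inducing a graph isomorphic to the relevant Grassmann graph is an $(i',j')$-connected component --- because all that the automorphism $f$ gives you directly is that $f({\mathcal G}(T))$ is another maximal set of that abstract isomorphism type. This converse is nowhere addressed and is the doubtful point: when $\min(n_i,n_j)=1$ the component is a complete graph on continuum many vertices, i.e.\ abstractly a Grassmann graph of lines in ${\mathbb C}^2$, and $\Gamma(\sigma,d)$ contains many large maximal cliques that are not connected components of any fixed type (the clique structure of these graphs is exactly one of the delicate points studied in \cite{PPZ}). So the reduction ``maximal induced Grassmann subgraph $\Rightarrow$ component'' is not available without substantial extra work.

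Two further remarks. Your intermediate assertion that matching isomorphism types forces $\{n_{i'},n_{j'}\}=\{n_i,n_j\}$ is false in general and is explicitly warned against in Section 4 of the paper: if $n_i=1$, the induced graph on a component is a complete graph of continuum cardinality regardless of $n_j$, so the isomorphism type does not determine $n_j$; this is precisely why the arguments of \cite{PPZ} fail when some $n_i=1$. (Fortunately the lemma only claims the weaker equivalence ``$n_{i'},n_{j'}>1 \Leftrightarrow n_i,n_j>1$'', and your derivation of that clause from completeness of the induced subgraph would be fine \emph{once} the main claim is known.) Finally, the uniformity of the target pair $(i',j')$ across all $(i,j)$-components is only gestured at (``tracking these linking edges through $f$''); Remark \ref{rem-ad} shows that the edges joining two adjacent components can be of several types, so this step also needs an actual argument rather than an appeal to the Johnson-graph structure, which in the paper's logic is only constructed \emph{after} Lemma \ref{lemma1} is available.
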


\begin{proof}
Lemma 9 in \cite{PPZ}.
\end{proof}

By Lemma \ref{lemma1}, $f$ induces a permutation $\tau$ on the set of all $2$-element subsets of the index set $I$. 
Recall that the {\it Johnson graph} $J(I,2)$ is the simple graph whose vertex set is formed by all $2$-element subsets of $I$
and two distinct subsets are adjacent vertices in this graph if their intersection is non-empty.

\begin{lemma}\label{lemma2}
The transformation $\tau$ is an automorphism of $J(I,2)$.
\end{lemma}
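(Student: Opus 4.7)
The plan is to extract adjacency in $J(I,2)$ from the edge-type structure connecting distinct $(i,j)$-connected components of $\Gamma(\sigma,d)$. The case $\abs{I}=3$ is immediate since $J(I,2)$ is then $K_3$ and every permutation is an automorphism, so I focus on $\abs{I}\ge 4$. The first observation is that $f$ preserves edge types via $\tau$, namely every $(i,j)$-edge is sent to a $\tau\{i,j\}$-edge. Indeed, the $(i,j)$-edges are precisely those whose endpoints lie in a common $(i,j)$-connected component, since any two adjacent operators that agree on the $a_t$-eigenspaces for every $t\notin\{i,j\}$ must differ exactly in their $a_i$- and $a_j$-eigenspaces; combined with Lemma \ref{lemma1}, this gives the claim.

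Next, using Lemma \ref{lemma0} and Remark \ref{rem-ad} together with a short case analysis on which types $(p,q)$ can occur for adjacent $A\in{\mathcal G}(T)$ and $B\in{\mathcal G}(Q)$, I would establish the following dichotomy for distinct ${\mathcal G}(T),{\mathcal G}(Q)$ corresponding to adjacent $T\ne Q$ in ${\mathcal G}\bigl((\sigma,d)_{-i,+j}\bigr)$: the edges joining them either all have a single type $\{s,t\}$ with $\{s,t\}\cap\{i,j\}=\emptyset$ (when $T,Q$ are $(s,t)$-adjacent), or form exactly two types $\{s,j\}$ and $\{s,i\}$ for some $s\notin\{i,j\}$ (when $T,Q$ are $(s,j)$-adjacent). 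Thus the number of edge types appearing is an invariant of the pair of components.

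For the main step, suppose $\{s,t\}\cap\{i,j\}=\emptyset$. Since $\abs{I}\ge 4$ one can pick $(s,t)$-adjacent $T,Q\in{\mathcal G}\bigl((\sigma,d)_{-i,+j}\bigr)$, and ${\mathcal G}(T),{\mathcal G}(Q)$ are then joined by edges all of type $\{s,t\}$. Applying $f$, the images are distinct $\tau\{i,j\}$-components joined solely by edges of type $\tau\{s,t\}$; the dichotomy at the pair $\tau\{i,j\}$ forces $\tau\{s,t\}\cap\tau\{i,j\}=\emptyset$. Hence $\tau$ sends disjoint pairs to disjoint pairs; applying the same reasoning to $f^{-1}$ (whose induced permutation is $\tau^{-1}$) yields the converse, so $\tau$ preserves adjacency in $J(I,2)$.

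I expect the main technical obstacle to be the dichotomy itself: one must verify that no ``third'' type of edge appears between ${\mathcal G}(T)$ and ${\mathcal G}(Q)$. This reduces to tracking which types $(p,q)$ of an adjacent pair $A\in{\mathcal G}(T),\ B\in{\mathcal G}(Q)$ are compatible with $T\ne Q$, using that $A$ and $B$ must agree on eigenspaces outside $\{p,q\}$ while the sum of the $a_i$- and $a_j$-eigenspaces of $A$ (resp.\ $B$) equals the $a_j$-eigenspace of $T$ (resp.\ $Q$).
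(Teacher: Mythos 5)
Your argument is correct, but it runs along a genuinely different track than the paper's. The paper detects adjacency of $\{i,j\}$ and $\{j,t\}$ in $J(I,2)$ through an \emph{exchange property} for triples of operators (Lemma \ref{lemma2-tech}): if $C$ is $(i,j)$-adjacent to $A$ and $(i',j')$-adjacent to $B$ with all four indices distinct, a ``swapped'' $C'$ always exists, whereas for overlapping pairs one can construct a configuration (via a deliberately non-orthogonal line $Q$) in which the swap is impossible; $\tau$ must respect the solvability of this configuration, which forces it to preserve disjointness. You instead read off disjointness from the \emph{edge-type spectrum} between two adjacent $(i,j)$-connected components ${\mathcal G}(T)$ and ${\mathcal G}(Q)$: exactly one type occurs precisely when $T,Q$ are $(s,t)$-adjacent with $\{s,t\}\cap\{i,j\}=\emptyset$, while a type meeting $\{i,j\}$ forces at least the two types $\{s,i\},\{s,j\}$ to occur. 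This dichotomy is exactly the content of Remark \ref{rem-ad} plus Lemma \ref{lemma0} and a routine check of which eigenspaces of adjacent $A\in{\mathcal G}(T)$, $B\in{\mathcal G}(Q)$ can differ, and your deduction (single type is preserved by $f$, hence $\tau$ and symmetrically $\tau^{-1}$ send disjoint pairs to disjoint pairs) is sound, including the preliminary observation that an edge inside an $(i',j')$-component is necessarily an $(i',j')$-edge. What your route buys is the complete avoidance of the explicit geometric construction in part (2) of Lemma \ref{lemma2-tech}, at the price of leaning on Remark \ref{rem-ad}, which the paper states without proof (it is imported from the first reference's Lemma 4); if you do not wish to cite it, you should write out the short case analysis you allude to, since it is the load-bearing step. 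The paper's proof is self-contained modulo its own technical lemma and works directly with operators rather than with the component machinery of Subsection 3.2.
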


The proof is based on the following technical lemma. 

\begin{lemma}\label{lemma2-tech}
The following assertions are fulfilled:
\begin{enumerate}[{\upshape (1)}]
\item Suppose that $\abs{I}\ge 4$ and for some mutually distinct $i,j,i',j'\in I$ there are operators $A,B,C\in {\mathcal G}(\sigma,d)$ 
such that $C$ is $(i,j)$-adjacent to $A$ and $(i',j')$-adjacent to $B$.
Then  there is an operator $C'\in {\mathcal G}(\sigma,d)$ which is $(i',j')$-adjacent to $A$ and $(i,j)$-adjacent to $B$.
\item For any mutually distinct $i,j,t\in I$ there are operators $A,B,C\in {\mathcal G}(\sigma,d)$
such that $C$ is $(i,j)$-adjacent to $A$ and $(j,t)$-adjacent to $B$ and there is no operator from ${\mathcal G}(\sigma,d)$
which is $(j,t)$-adjacent to $A$ and $(i,j)$-adjacent to $B$.
\end{enumerate}
\end{lemma}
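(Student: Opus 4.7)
The plan is to compare eigenspaces of $A$, $C$, $B$ via the adjacency relations, exploiting the fundamental difference between the disjoint-pair situation in (1) and the overlapping-pair situation in (2).

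For part (1), since the pairs $\{i,j\}$ and $\{i',j'\}$ are disjoint, the two adjacencies touch disjoint coordinates. Writing $X_k$ for the $a_k$-eigenspace of $A$, the operator $C$ has eigenspaces $X_k$ for $k\notin\{i,j\}$ and some $X'_i, X'_j$ at $k=i,j$; then $B$ has eigenspaces $X_k$ for $k\notin\{i,j,i',j'\}$, the same $X'_i, X'_j$ at $k=i,j$, and new $Y_{i'}, Y_{j'}$ at $k=i',j'$. I would define $C'$ as the operator whose $a_k$-eigenspace is $X_k$ for $k\notin\{i',j'\}$ and $Y_{i'}, Y_{j'}$ at $k=i',j'$. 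The required mutual orthogonality reduces to $Y_{i'}, Y_{j'} \perp X_i+X_j$, which holds because $Y_{i'}, Y_{j'}$ are orthogonal to $X'_i, X'_j$ (as eigenspaces of $B$) and $X'_i+X'_j = X_i+X_j$ (from the $(i,j)$-adjacency of $A$ and $C$). The identity $\sum_{k\neq i',j'} X_k + Y_{i'} + Y_{j'} = H$ similarly follows from $X_{i'}+X_{j'} = Y_{i'}+Y_{j'}$ (a consequence of the $(i',j')$-adjacency of $C$ and $B$). The required adjacencies of $C'$ to $A$ and to $B$ are then immediate: the pair $(X_{i'},X_{j'})$ versus $(Y_{i'},Y_{j'})$ is adjacent by transferring from the $(C,B)$ adjacency, and the pair $(X_i,X_j)$ versus $(X'_i,X'_j)$ is adjacent by transferring from the $(A,C)$ adjacency.

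For part (2), the key obstruction is that $(i,j)$ and $(j,t)$ share the index $j$. Label eigenspaces as before: $A$ has $X_k$; $C$ has $X'_i, X'_j$ at $i,j$ and $X_k$ otherwise; $B$ has $X'_i, X''_j, \tilde X_t$ at $i,j,t$ and $X_k$ otherwise, with $X'_j+X_t = X''_j+\tilde X_t$ and $\tilde X_t$ adjacent to $X_t$. The mutual orthogonality of the eigenspaces of $A$ gives $(X_i+X_j)\cap(X_j+X_t) = X_j$, so from $X'_j \subset X_i+X_j$ with $X'_j \neq X_j$ one deduces $X'_j \not\subset X_j+X_t$, whence $X'_j+X_t \neq X_j+X_t$. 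This means one can explicitly choose $\tilde X_t \subset X'_j+X_t$ which is adjacent to $X_t$ yet not contained in $X_j+X_t$ (the set of such subspaces has positive codimension inside the relevant Grassmannian). Now any candidate $D$ that is $(j,t)$-adjacent to $A$ has its $a_t$-eigenspace forced inside $X_j+X_t$, while $(i,j)$-adjacency to $B$ forces $D$ to share $B$'s $a_t$-eigenspace, namely $\tilde X_t$; these two requirements are incompatible, proving non-existence.

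The routine bookkeeping for part (1) should pose no serious difficulty once $C'$ is defined correctly. I expect the main technical point to lie in part (2): one must exhibit a configuration in which $\tilde X_t$ can genuinely be realized as the $a_t$-eigenspace of an operator in ${\mathcal G}(\sigma,d)$ satisfying both the adjacency with $X_t$ and the non-containment in $X_j+X_t$. Once the inequality $X'_j+X_t\neq X_j+X_t$ is in hand this is a dimension count, but care is needed if some $n_k$ is infinite, in which case $a_k=0$ and that distinguished eigenspace lies among the common $X_k$ untouched by the two adjacencies, so all subspaces entering the construction remain finite-dimensional and the argument goes through.
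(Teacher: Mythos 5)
Your argument follows essentially the same route as the paper's. Part (1) is the identical swap construction. For part (2), your obstruction (the $a_t$-eigenspace of any candidate $D$ is forced both to lie in $X_j+X_t$, by $(j,t)$-adjacency to $A$, and to equal $B$'s $a_t$-eigenspace $\tilde X_t$, by $(i,j)$-adjacency to $B$) is equivalent to the paper's obstruction (non-orthogonality of $X_i$ and $B$'s $a_t$-eigenspace), since $\tilde X_t\subset X_i+X_j+X_t=X_i\oplus(X_j+X_t)$. Your construction --- fix any $(i,j)$-adjacent $C$, observe that $X'_j\not\subset X_j+X_t$, then pick $\tilde X_t\subset X'_j+X_t$ adjacent to $X_t$ but not contained in the hyperplane $(X'_j\cap X_j)+X_t$ of $X'_j+X_t$ --- is a slightly cleaner, top-down variant of the paper's explicit bottom-up choice of $P$, $Y_i$, $Q$, $Y_t$, $Y_j$; both are correct, and the existence of $\tilde X_t$ is easily made explicit as $M+{\mathbb C}v$ with $M$ a hyperplane of $X_t$ and $v\in(X'_j+X_t)\setminus(X_j+X_t)$.

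One claim in your closing remark is false: you assert that if some $n_k$ is infinite, then the corresponding eigenspace ``lies among the common $X_k$ untouched by the two adjacencies.'' The lemma quantifies over \emph{all} mutually distinct $i,j,t\in I$, so the infinite-dimensional index may perfectly well be one of $i,j,t$. This is not a fatal gap, because your construction is in fact insensitive to which single index carries infinite dimension: the subspaces involved are then closed of finite codimension, the identity $(X_i+X_j)\cap(X_j+X_t)=X_j$ and the deduction from $X'_j\ne X_j$ to $X'_j\not\subset X_j+X_t$ still hold (two nested closed subspaces of the same finite codimension coincide), and $\tilde X_t=M+{\mathbb C}v$ works with $M$ a closed hyperplane of $X_t$. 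So the argument survives in the infinite-dimensional case, but for a different reason than the one you state; as written, that justification should be corrected.
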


\begin{proof}
(1). For every $s\in I\setminus\{i,j,i',j'\}$ the eigenspaces of $A,B,C$ corresponding to $a_s$ are coincident.
For $s\in \{i,j,i',j'\}$ we denote by $X_s$ and $Y_s$ the eigenspaces of $A$ and $B$, respectively.  
The eigenspaces of $C$ corresponding to $a_s$, $s\in\{i,j\}$ and $s\in \{i',j'\}$ coincide with $Y_s$ and $X_s$ (respectively).
This means that $X_s, Y_s$ are adjacent for all $s\in \{i,j,i',j'\}$;
moreover
$$X_i+X_j=Y_i+Y_j\quad\text{and}\quad X_{i'}+X_{j'}=Y_{i'}+Y_{j'}.$$
Consider the operators $C'\in{\mathcal G}(\sigma,d)$ whose eigenspace corresponding to $a_{s}$, $s\in I\setminus\{i,j,i',j'\}$  coincides with $X_s=Y_s$
and the eigenspaces corresponding to $a_s$, $s\in\{i,j\}$ and $s\in \{i',j'\}$ coincide with $X_s$ and $Y_s$ (respectively).
This operator is as required.

(2). Suppose that $A,B,C$ are operators from ${\mathcal G}(\sigma,d)$ such that $C$ is $(i,j)$-adjacent to $A$ and $(j,t)$-adjacent to $B$.
If $s\in I\setminus\{i,j,t\}$, then the eigenspaces of $A,B,C$ corresponding to $a_s$ are coincident.
For $s\in \{i,j,t\}$ we denote by $X_s$ and $Y_s$ the eigenspaces of $A$ and $B$, respectively. 
The eigenspaces of $C$ corresponding to $a_i$ and $a_t$ coincide with $Y_i$ and $X_t$ (respectively) and, consequently, 
these subspaces are orthogonal.  
If $C'\in{\mathcal G}(\sigma,d)$ is $(j,t)$-adjacent to $A$ and $(i,j)$-adjacent to $B$,
then the eigenspaces of $C'$ corresponding to $a_{i}$ and $a_t$ are $X_i$ and $Y_t$ (respectively) which implies that
$X_i,Y_t$ are orthogonal.

Now, for any operator $A\in {\mathcal G}(\sigma,d)$ whose eigenspace corresponding to $a_s$, $s\in I$
is denoted by $X_s$ we construct $B,C\in {\mathcal G}(\sigma,d)$ satisfying the required conditions.

Let us take any $1$-dimensional subspace $P\subset X_j$. 
Notice that it is the orthogonal complement of $X_i$ in $X_i+P$.
Next, take a  hyperplane $Y_i\subset X_i+P$ distinct from $X_i$ and denote 
by $Q$  the orthogonal complement of $Y_i$  in $X_i+P$ (it is $1$-dimensional).
It is clear that $X_i,Y_i$ are adjacent and $P\ne Q$, i.e.\ $Q$ is not orthogonal to $X_i$.
The subspace $Y_i$ is orthogonal to $X_t$ (since $X_i$ and $P\subset X_j$ both are orthogonal to $X_t$ and $Y_i\subset X_i+P$). 

Let $Y_t$ be an $n_t$-dimensional subspace containing $Q$ and adjacent to $X_t$.
Note that $Q$ is not contained in $X_t$ 
($Q$ is not orthogonal to $X_i$)
which implies that $Y_t=(X_t\cap Y_t)+Q$.
Thus, $Y_t$ is orthogonal to $Y_i$ (since $X_t$ and $Q$ both are orthogonal to $Y_i$).
We have
$$Y_i+Y_t \subset X_i+X_j+X_t$$
and write $Y_j$ for the orthogonal complement of $Y_i+Y_t$ in $X_i+X_j+X_t$.
Denote by $B$ the operator from ${\mathcal G}(\sigma,d)$ whose eigenspaces corresponding to $a_s$, $s\in I\setminus\{i,j,t\}$ and $s\in \{i,j,t\}$
coincide with $X_s$ and $Y_s$, respectively. 

We have $Y_i\subset X_i+X_j$, so
denote by $Z_j$ the orthogonal complement of $Y_i$ in $X_i+X_j$.
The subspaces $X_i,Y_i$ are adjacent and, consequently, $X_j,Z_j$ are adjacent  
(as the orthogonal complements of $X_i$ and $Y_i$ in $X_i+X_j$).
The inclusion $Z_j\subset X_i+X_j$ implies that $Z_j$ is orthogonal to $X_t$.
Consider the operator $C\in {\mathcal G}(\sigma,d)$ defined as follows:
\begin{itemize}
\item the eigenspace of $C$ corresponding to $a_s$, $s\in I\setminus\{i,j,t\}$  coincides with $X_s$;
\item the eigenspaces of $C$ corresponding to $a_i,a_j,a_t$ coincide with $Y_i,Z_j,X_t$ (respectively).
\end{itemize}
This operator is $(i,j)$-adjacent to $A$. Note that 
\begin{equation}\label{eq-1}
Z_j+X_t= Y_j+Y_t.
\end{equation}
 Since $X_t,Y_t$ are adjacent, $Z_j,Y_j$ are adjacent as the orthogonal complements of $X_t$ and $Y_t$ in \eqref{eq-1}.
Therefore, $C$ is $(j,t)$-adjacent to $B$.

Recall that $Q\subset Y_t$ is not orthogonal to $X_i$, i.e.\ $X_i$ and $Y_t$ are not orthogonal.
This means that there is no operator in ${\mathcal G}(\sigma,d)$ which is $(j,t)$-adjacent to $A$ and $(i,j)$-adjacent to $B$. 
\end{proof}

\begin{proof}[Proof of Lemma \ref{lemma2}]
If $\abs{I}=3$, then the intersection of any two distinct $2$-element subsets of $I$ is non-empty and the statement is trivial.

Let $\abs{I}\ge 4$ and  let $i,j,t\in I$ be mutually distinct indices. 
Suppose that 
$$\tau\bigl(\{i,j\}\bigr)=\{i',j'\}\qquad\text{and}\qquad\tau\bigl(\{j,t\}\bigr)=\{s',t'\}.$$
By the statement (2) from Lemma \ref{lemma2-tech}, 
there are $A,B,C\in {\mathcal G}(\sigma,d)$ such that $C$ is $(i,j)$-adjacent to $A$ and $(j,t)$-adjacent to $B$ and there is no operator from ${\mathcal G}(\sigma,d)$
which is $(j,t)$-adjacent to $A$ and $(i,j)$-adjacent to $B$.
The operator $f(C)$ is $(i',j')$-adjacent to $f(A)$ and $(s',t')$-adjacent to $f(B)$.
If $i',j',s',t'$ are mutually distinct, then the statement (1) from Lemma \ref{lemma2-tech} implies the existence of $C'\in {\mathcal G}(\sigma,d)$ which is 
$(s',t')$-adjacent to $f(A)$ and $(i',j')$-adjacent to $f(B)$.
Then $f^{-1}(C')$ is $(j,t)$-adjacent to $A$ and $(i,j)$-adjacent to $B$ which is impossible.
Therefore, 
$$\tau\bigl(\{i,j\}\bigr)\cap\tau\bigl(\{j,t\}\bigr)\ne\emptyset.$$
Applying the same arguments to $f^{-1}$ and $\tau^{-1}$ we establish that $\tau$ is an automorphism of $J(I,2)$.
\end{proof}

If $\abs{I}\ne 4$, then every automorphism of $J(I,2)$ is induced by a permutation on the set $I$.
In the case when $\abs{I}=4$, an automorphism of $J(I,2)$ is induced by a permutation on $I$ or 
it is the composition of an automorphism induced by a permutation and the automorphism sending every $2$-element subset $J\subset I$ to the complement $I\setminus J$.
Therefore, one of the following 
possibilities
is realised:
\begin{itemize}
\item there is a permutation $\delta$ on the set $I$ such that $f$ sends $(i,j)$-adjacent operators to $(\delta(i),\delta(j))$-adjacent operators;
\item $\abs{I}=4$ and there is a permutation $\delta$ on $I$ such that $f$ sends $(i,j)$-adjacent operators to $(i',j')$-adjacent operators,
where $\{i',j'\}=I\setminus\{\delta(i),\delta(j)\}$.
\end{itemize}
In the second case, $\tau$ transfers the collection of $2$-element subsets of $I$ containing a certain $i\in I$ to 
the collection of $2$-element subsets contained in a certain $3$-element subset of $I$.

\subsection{The case $\abs{I}\ne 4$}
Let $i\in I$. We say that two operators $A,B\in{\mathcal G}(\sigma,d)$ are $\overline{i}$-{\it connected}
if there is a sequence 
$$A=A_{0},A_1,\dots,A_{t}=B$$
such that for every $s\in \{1,\dots,t\}$ the operators $A_{s-1},A_s$ are $(i_s,j_s)$-adjacent and $i\not\in \{i_s,j_s\}$.

Let ${\mathcal G}(i)$ be the set of all eigenspaces of operators from ${\mathcal G}(\sigma,d)$ corresponding to $a_{i}$.
If $n_i$ is finite, then ${\mathcal G}(i)$ is the Grassmannian of $n_i$-dimensional subspaces.
In the case when $n_i$ is infinite, ${\mathcal G}(i)$ is formed by all closed subspaces of codimension
$$n^{i}=\sum_{j\in I\setminus\{i\}} n_j.$$
For every $S\in {\mathcal G}(i)$ we denote by $[S]_i$ 
the set of all operators from ${\mathcal G}(\sigma,d)$ whose eigenspaces corresponding to $a_i$ coincide with $S$.
Any two operators from $[S]_i$ are  $\overline{i}$-connected. 
Conversely, if ${\mathcal X}$ is a subset of  ${\mathcal G}(\sigma,d)$, where any two elements are $\overline{i}$-connected,
then all operators from ${\mathcal X}$ have the same eigenspace corresponding to $a_i$, i.e.\ ${\mathcal X}$ is contained in a certain $[S]_i$.
Therefore, 
$$\bigl\{[S]_i: S\in {\mathcal G}(i)\bigr\}$$
can be characterised as the family of all subsets of ${\mathcal G}(\sigma,d)$
maximal with respect to the property that any two elements are $\overline{i}$-connected.

Suppose that $\tau$ (the automorphism of $J(I,2)$ associated to $f$) is induced by a permutation $\delta$ on $I$
(this holds if $\abs{I}\ne 4$).
The automorphism $f$ sends $\overline{i}$-connected operators to $\overline{\delta(i)}$-connected operators.
Therefore, for every  $S\in {\mathcal G}(i)$ there is  $S'\in {\mathcal G}(\delta(i))$ such that 
$$f\bigl([S]_i\bigr)=[S']_{\delta(i)};$$
in other words, there is a map
$$f_i:{\mathcal G}(i)\to {\mathcal G}(\delta(i))$$
satisfying 
$$f\bigl([S]_i\bigr)=\bigl[f_{i}(S)\bigr]_{\delta(i)}$$
for every $S\in {\mathcal G}(i)$.
The map $f_i$ is bijective (since $f$ is bijective).

\begin{lemma}\label{lemma3}
The map $f_i$ is adjacency preserving in both directions.
\end{lemma}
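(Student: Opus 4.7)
The approach is to translate adjacency in $\mathcal{G}(i)$ into adjacency of operators under $f$ via the following equivalence: for distinct $S, S' \in \mathcal{G}(i)$, the subspaces $S, S'$ are adjacent in $\mathcal{G}(i)$ if and only if there exist adjacent operators $A \in [S]_i$ and $B \in [S']_i$ in $\Gamma(\sigma, d)$. Granting this, the lemma follows quickly. If $S, S'$ are adjacent, choose adjacent $A, B$ as above; then $f(A), f(B)$ are adjacent, and since $f\bigl([S]_i\bigr) = \bigl[f_i(S)\bigr]_{\delta(i)}$ and $f_i$ is injective, applying the equivalence in $\mathcal{G}(\delta(i))$ yields adjacency of $f_i(S)$ and $f_i(S')$. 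The reverse implication is obtained by running the same argument with $f^{-1}$, whose associated permutation on $I$ is $\delta^{-1}$ and which induces the map $f_i^{-1}$.

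The easy direction of the equivalence is essentially bookkeeping. Suppose $A \in [S]_i$ and $B \in [S']_i$ are adjacent operators with $S \ne S'$. Adjacent operators in $\Gamma(\sigma, d)$ are $(p, q)$-adjacent for some pair of distinct indices $\{p, q\}$, and then the eigenspaces of $A$ and $B$ at $a_t$ coincide for all $t \notin \{p, q\}$; since the eigenspaces at $a_i$ differ, we must have $i \in \{p, q\}$. Hence $A, B$ are $(i, j)$-adjacent for some $j \in I \setminus \{i\}$, and the defining condition of $(i, j)$-adjacency states precisely that the eigenspaces $S, S'$ at $a_i$ are adjacent subspaces.

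The harder direction calls for an explicit construction. Given adjacent $S, S' \in \mathcal{G}(i)$, fix any $j \in I \setminus \{i\}$; let $v$ and $w$ be unit vectors spanning the one-dimensional orthogonal complements of $S$ and $S'$ inside $S + S'$ respectively, and fix an orthogonal decomposition
$(S + S')^{\perp} = N'_j \oplus \bigoplus_{t \in I \setminus \{i, j\}} N_t$
with $\dim N_t = n_t$ and $\dim N'_j = n_j - 1$. Define $A \in \mathcal{G}(\sigma, d)$ by prescribing the eigenspaces $X_i = S$, $X_j = N'_j + \mathbb{C} v$, and $X_t = N_t$ for $t \ne i, j$, and define $B$ analogously with $Y_i = S'$ and $Y_j = N'_j + \mathbb{C} w$. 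A direct check confirms that the prescribed subspaces are mutually orthogonal, have the required dimensions, and that $A, B$ are $(i, j)$-adjacent.

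The main obstacle is the dimension bookkeeping underlying this construction. It succeeds because $S + S'$ has codimension one less than $S$ in $H$, so $\dim (S + S')^{\perp} = (n_j - 1) + \sum_{t \ne i, j} n_t$, which matches the two pieces of the decomposition precisely. The case in which $n_i$ is infinite (so $a_i = 0$ and $S, S'$ are closed subspaces of finite codimension $n^i = \sum_{t \ne i} n_t$) is handled by reading the same argument as a statement about codimensions rather than dimensions; the identity $\dim (S+S')^\perp = n^i - 1 = (n_j-1) + \sum_{t \ne i, j} n_t$ again matches.
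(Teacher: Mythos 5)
Your proposal is correct and follows essentially the same route as the paper: the paper's proof simply cites the equivalence that $S,T\in{\mathcal G}(i)$ are adjacent iff suitable $(i,t)$-adjacent operators exist in $[S]_i$ and $[T]_i$, and you prove (a sufficient existential form of) that equivalence explicitly and then transport it through $f$ and $f^{-1}$. The only cosmetic point is that when some $n_j$ with $j\ne i$ is infinite you should either pick the auxiliary index $j$ with $n_j$ finite (possible since $\abs{I}\ge 3$ and at most one $n_t$ is infinite) or read $\dim N'_j=n_j-1$ as a statement about a closed subspace of the appropriate codimension.
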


\begin{proof}
The statement is a consequence of the following fact:  
$S,T\in {\mathcal G}(i)$ are adjacent if and only if for every $t\in I\setminus\{i\}$
there are $(i,t)$-adjacent operators $A\in [S]_i$ and $B\in [T]_i$.
\end{proof}

\begin{lemma}\label{lemma4}
The permutation $\delta$ belongs to $S(d)$.
\end{lemma}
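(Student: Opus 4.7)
The plan is to use the Grassmann graph isomorphism $f_i : \mathcal{G}(i) \to \mathcal{G}(\delta(i))$ provided by Lemma \ref{lemma3}, and to apply Chow's theorem to force $n_{\delta(i)} = n_i$. The ``complementary dimension'' possibility permitted by Chow's theorem will be excluded using dimension counting together with the assumption $\abs{I} \ge 3$.

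First I would describe the Grassmannians $\mathcal{G}(i)$ more concretely. If $n_i$ is finite, then $\mathcal{G}(i) = \mathcal{G}_{n_i}(H)$; if $n_i$ is infinite (so $a_i = 0$ and $\dim H$ is infinite), then orthocomplementation identifies $\mathcal{G}(i)$, as a graph, with $\mathcal{G}_{n^i}(H)$, where $n^i = \sum_{j \ne i} n_j$ is finite. I would then split into cases according to which, if any, of $n_i$ and $n_{\delta(i)}$ is infinite, recalling that at most one index $j_0$ can have $n_{j_0}$ infinite.

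The mixed case, say $n_i$ finite and $n_{\delta(i)}$ infinite (so $\delta(i) = j_0$), is impossible: $f_i$ then becomes a graph isomorphism $\mathcal{G}_{n_i}(H) \to \mathcal{G}_{n^{j_0}}(H)$ in the infinite-dimensional space $H$, and Chow's theorem forces $n_i = n^{j_0} = \sum_{j \ne j_0} n_j$; but $\abs{I} \ge 3$ gives some $k \notin \{i, j_0\}$ with $n_k \ge 1$, so $n^{j_0} \ge n_i + n_k > n_i$, a contradiction. The symmetric case is identical, and the case when both eigenspace-dimensions are infinite forces $i = \delta(i) = j_0$, for which there is nothing to prove.

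In the remaining case both $n_i$ and $n_{\delta(i)}$ are finite, so $f_i : \mathcal{G}_{n_i}(H) \to \mathcal{G}_{n_{\delta(i)}}(H)$. Chow's theorem yields either $n_{\delta(i)} = n_i$ or $n_i + n_{\delta(i)} = \dim H$ (the latter only when $\dim H$ is finite, via a duality map). In the second subcase, if $\delta(i) \ne i$ then $\abs{I} \ge 3$ provides some $k \notin \{i, \delta(i)\}$ with $n_k \ge 1$, contradicting $\sum_{j \in I} n_j = \dim H = n_i + n_{\delta(i)}$; if $\delta(i) = i$, then $n_{\delta(i)} = n_i$ trivially. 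Hence $n_{\delta(i)} = n_i$ in all cases, so $\delta \in S(d)$. The only subtle point is invoking Chow's theorem correctly when an eigenspace is infinite-dimensional (passing through orthocomplementation); everything else is straightforward counting exploiting $\abs{I} \ge 3$.
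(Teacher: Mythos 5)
Your proposal is correct and follows essentially the same route as the paper: apply Lemma \ref{lemma3} to get a Grassmann graph isomorphism $f_i$, invoke the classification of isomorphisms between Grassmann graphs (what the paper records as Remark \ref{rem-gr} rather than Chow's theorem proper), and rule out the complementary-dimension and infinite-dimension cases because $\abs{I}\ge 3$ forces an extra index with $n_k\ge 1$. Your case analysis matches the paper's, just written out in slightly more detail.
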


To prove Lemma \ref{lemma4} we need to shed some light on isomorphisms between Grassmann graphs 
(see \cite[Chapter 2]{Pankov-book2} for the details).

\begin{rem}\label{rem-gr}{\rm
The Grassmann graphs formed by $m$-dimensional and $m'$-dimensional subspaces of $H$ are isomorphic 
if and only if $m=m'$ or $H$ is finite-dimensional and $\dim H=m+m'$. 
In the case when $H$ is infinite-dimensional, the Grassmann graph formed by closed subspaces of codimension $m$ is isomorphic to 
the Grassmann graph of $m$-dimensional subspaces (the orthocomplementation map) and, consequently,
it is isomorphic to the Grassmann graph of $m'$-dimensional subspaces if and only if $m=m'$.
}\end{rem}

\begin{proof}[Proof of Lemma \ref{lemma4}]
We need to show that $n_{i}=n_{\delta(i)}$ for every $i\in I$.

If $n_i$ and $n_{\delta(i)}$ both are finite, then $f_i$ is an isomorphism between 
the Grassmann graphs of $n_i$-dimensional and $n_{\delta(i)}$-dimensional subspaces of $H$
(Lemma \ref{lemma3}). By Remark \ref{rem-gr}, $n_{i}=n_{\delta(i)}$ or $H$ is finite-dimensional and 
$\dim H=n_i+n_{\delta(i)}$. In the second case, we have $\abs{I}=2$ which is impossible. 

Suppose that one of $n_i,n_{\delta(i)}$, say $n_i$, is 
infinite.
If $\delta(i)\ne i$, then $n_{\delta(i)}$ is finite and $f_i$ is an isomorphism between the Grassmann graphs formed by closed subspaces of codimension $n^i$
and $n_{\delta(i)}$-dimensional subspaces of $H$ (Lemma \ref{lemma3}). 
By Remark \ref{rem-gr}, we have $n^i=n_{\delta(i)}$ which means that $\abs{I}=2$, a contradiction.
\end{proof}

By Lemma \ref{lemma4},
 we can assume that $f$ preserves each type of adjacency (otherwise, we replace $f$ by the automorphism $\delta^{-1}f$).
Then each $f_i$ is a bijective transformation of ${\mathcal G}(i)$ 
and 
$$f\biggl(\sum_{i\in I}a_iP_{X_i}\biggr)=\sum_{i\in I}a_iP_{f_i(X_i)},$$
if $X_i\in {\mathcal G}(i)$ and $H$ is the orthogonal sum of all $X_i$.
Therefore, $X\in {\mathcal G}(i)$ and $Y\in {\mathcal G}(j)$ are orthogonal 
if and only if $f_i(X)$ and $f_j(Y)$ are orthogonal.

Suppose that $\dim H=n$ is finite.
Since $\abs{I}\ge 3$, for any distinct $i,j\in I$ there is $t\in I\setminus\{i,j\}$. 
Consider the bijective transformation $h$ of ${\mathcal G}_{n-n_t}(H)$ 
which sends every $(n-n_t)$-dimensional subspace $Z$ to $f_{t}\bigl(Z^{\perp}\bigr)^{\perp}$.
For $X\in {\mathcal G}_{n_i}(H)$ and $Z\in {\mathcal G}_{n-n_t}(H)$
we have $X\subset Z$ if and only if $X$ and $Z^{\perp}$ are orthogonal. 
The latter holds if and only if $f_i(X)$ and $f_t\bigl(Z^{\perp}\bigr)$ are orthogonal or, equivalently, $f_i(X)$ is contained in $f_t\bigl(Z^{\perp}\bigr)^{\perp}$.
Therefore,
$$X\subset Z\;\Longleftrightarrow\;f_i(X)\subset h(Z).$$
By Proposition \ref{prop-Chow}, $f_i$ and $h$ are induced by the same semilinear automorphism of $H$. 
Similarly, we establish that $f_j$ and $h$ are induced by the same semilinear automorphism of $H$.
So, all $f_i$ are induced by the same semilinear automorphism $U$ of $H$.
Since $U$ sends orthogonal vectors to orthogonal vectors, it is a scalar multiple of a unitary or anti-unitary operators \cite[Proposition 4.2]{Pankov-book2}.
 For every non-zero scalar $a$ and every subspace $X\subset H$ we have  $aU(X)=U(X)$,
 i.e.\ we can assume that $U$ is unitary or anti-unitary.
This gives the claim.

Let $i$ be an index from $I$ such that  $n_i$ is infinite. 
Then $n^i=\sum_{j\in I\setminus \{i\}}n_j$ is finite.
As above,
we consider the bijective transformation $h$ of ${\mathcal G}_{n^i}(H)$ sending every $n^i$-dimensional subspace $Z$ to $f_{t}\bigl(Z^{\perp}\bigr)^{\perp}$
and show that for all $j\in I\setminus \{i\}$ the transformations $f_j$ are induced by the same semilinear automorphism $U$ of $H$.
We can assume that $U$ is unitary or anti-unitary 
(because it preserves orthogonality). 
For every $X\in {\mathcal G}(i)$ there are mutually orthogonal subspaces $X_j\in {\mathcal G}(j)$, $j\in I\setminus\{i\}$ 
whose sum is the orthogonal complement of $X$. 
Then $f_{i}(X)$ is the orthogonal complement of 
$$\sum_{j\in I\setminus\{i\}}f_{j}(X_j)=U\biggl(\sum_{j\in I\setminus\{i\}}X_j\biggr)=U\bigl(X^{\perp}\bigr)=U(X)^{\perp}$$
which implies that $f_{k}(X)=U(X)$, i.e.\ $f_k$ also is induced by $U$.

\subsection{The case $\abs{I}=4$}
Suppose that $I=\{1,2,3,4\}$ and $n_1\ge n_2\ge n_3 \ge n_4$.
Then one of the following possibilities is realised:
\begin{enumerate}[(1)]
\item $n_1\ge n_2>1$;
\item $n_1>1$ and $n_2=n_3=n_4=1$;
\item $n_1=n_2=n_3=n_4=1$.
\end{enumerate}

{\it The case (1)}. The automorphism $f$ transfers $(1,2)$-connected components to $(i,j)$-connected components for some $i,j\in I$.
Since $n_1,n_2$ both are greater than $1$, Lemma \ref{lemma1} shows that 
$$n_1=n_i, n_2=n_j\qquad\text{or}\qquad n_1=n_j, n_2=n_i.$$ 
Without loss of generality we assume that the first possibility is realised.
Then the permutation $(1,i)(2,j)$ belongs to $S(d)$ and the automorphism $(1,i)(2,j)f$ preserves the $(1,2)$-adjacency. 
We can replace $f$ by $(1,i)(2,j)f$, which means that we can assume that $f$ preserves the $(1,2)$-adjacency.
There is a one-to-one correspondence between $(1,2)$-connected components and operators from 
the conjugacy class ${\mathcal G}\bigl((\sigma,d)_{-2,+1}\bigr)$. 
In our case, $(\sigma,d)_{-2,+1}=(\sigma',d')$, where
$$\sigma'=\{a_1,a_3,a_4\},\quad d'=\{n_1+n_2,n_3,n_4\}$$
and the associated set of indices is $\{1,3,4\}$.
The automorphism  $f$ induces a bijective transformation $f'$ of ${\mathcal G}(\sigma',d')$. 
Lemma \ref{lemma0} shows that $f'$ is an automorphism of the graph $\Gamma(\sigma',d')$. 
By Subsection 3.4, for every $A\in {\mathcal G}(\sigma',d')$ we have 
$$f'(A)=U\delta'(A)U^{*},$$
where $U$ is a unitary or anti-unitary operator and $\delta'\in S(d')$ is a permutation on $\{1,3,4\}$.
Since $n_1+n_2>n_3\ge n_4$, 
we get $\delta'(1)=1$.
Therefore, $\delta'$ is identity or 
the transposition $(3,4)$.
In the second case, we have $n_3=n_4$ and 
$(3,4)\in S(d)$. The automorphism 
$(3,4)f$ of $\Gamma(\sigma,d)$
induces the automorphism $(3,4)f'$ of 
$\Gamma(\sigma',d')$. The latter 
preserves each type of adjacency.
So, we can assume that $\delta'$ is identity and $f'$ preserves each type of adjacency.
This immediately  implies that  $f$ preserves the $(3,4)$-adjacency.
Now, we show that $f$ preserves the $(1,3)$-adjacency and $(2,3)$-adjacency or interchanges  them.

Suppose that $A,B\in {\mathcal G}(\sigma,d)$ are $(1,3)$-adjacent. Consider $A',B'\in {\mathcal G}(\sigma',d')$
such that $A\in {\mathcal G}(A')$ and $B\in {\mathcal G}(B')$ (see Subsection 3.2). Then $A'$ and $B'$ also are $(1,3)$-adjacent. 
The same holds for $f'(A')$ and $f'(B')$ (since $f'$ preservers each type of adjacency).
We have
$$f(A)\in {\mathcal G}\bigl(f'(A')\bigr),\qquad f(B)\in {\mathcal G}\bigl(f'(B')\bigr)$$
and $f'(A'),f'(B')$ are $(1,3)$-adjacent.
 By Remark \ref{rem-ad}, $f(A),f(B)$ are $(1,3)$-adjacent or $(2,3)$-adjacent. 
 We apply the same arguments to a pair of $(2,3)$-adjacent operators from ${\mathcal G}(\sigma,d)$ and get the claim.

So, $\tau$ (the automorphism of $J(I,2)$ associated to $f$) preserves the collection of $2$-element subsets of $I$ containing $3$.
This guarantees that $\tau$ is induced by a permutation on $I$ (see the remark at the end of Subsection 3.3). 
Hence, we can apply the arguments from Subsection 3.4.

{\it The case (3).} 
Since $n_1=n_2=n_3=n_4$, every permutation on $I$ belongs to $S(d)$.
If $f$ transfers the $(1,2)$-adjacency  to the $(i,j)$-adjacency for some $i,j\in I$,
then the automorphism $(1,i)(2,j)f$ preserves the $(1,2)$-adjacency. 
We assume that $f$ preserves the $(1,2)$-adjacency and repeat arguments used in the case (1).

{\it The case (2)}. 
Since $n_2=n_3=n_4$, it 
suffices 
to show that $f$ transfers the $(1,2)$-adjacency to the $(1,i)$-adjacency for a certain $i\in \{2,3,4\}$.
After that, we can repeat the above 
arguments.

Suppose to the contrary 
that $f$ sends the $(1,2)$-adjacency to the $(i,j)$-adjacency for some $i,j\in \{2,3,4\}$.
Without loss of generality we can assume that $(i,j)=(3,4)$.
Then, by Lemma \ref{lemma0},
$f$ induces an isomorphism $f'$ of $\Gamma\bigl((\sigma, d)_{-2,+1}\bigr)$ to $\Gamma\bigl((\sigma, d)_{-4,+3}\bigr)$.
In the present case, $(\sigma, d)_{-2,+1}=(\sigma',d')$ and $(\sigma, d)_{-4,+3}=(\sigma'',d'')$,
where
$$\sigma'=\{a_1,a_3,a_4\},\quad d'=\{n_1+1,1,1\}$$
and
$$\sigma''=\{a_1,a_2,a_3\},\quad d''=\{n_1,1,2\}.$$
Consider the inverse isomorphism $f'^{-1}$. 
It sends the $(1,3)$-adjacency of $\Gamma(\sigma'',d'')$ to the $(t,s)$-adjacency of $\Gamma(\sigma'',d'')$
for some $t,s\in \{1,3,4\}$. 
The dimensions of the eigenspaces of operators from ${\mathcal G}(\sigma'',d'')$ corresponding to $a_1$ and $a_3$ both are  greater than $1$.
Lemma \ref{lemma0} shows that
the same holds for the dimensions of the eigenspaces of operators from ${\mathcal G}(\sigma',d')$ corresponding to $a_t$ and $a_s$.
The latter is impossible, since $d'=\{n_1+1,1,1\}$; a contradiction.

\section{Final remarks}
Theorem \ref{theorem-main} was proved in \cite{PPZ} under the assumption that $n_i>1$ for all $i\in I$.
Now, we describe briefly the reasonings from \cite{PPZ} and explain why they cannot be exploited in the general case.

Some information concerning isomorphisms between Grassmann graphs can be found in Remark \ref{rem-gr},
but we need a bit more. 
Let $X$ and $X'$ be finite-dimensional subspaces of $H$ (not necessarily of the same dimension). 
Consider the Grassmann graphs $\Gamma$ and $\Gamma'$ formed by $m$-dimensional subspaces of $X$
and $m'$-dimensional subspaces of $X'$, respectively. 
In the case when $1<m<\dim X-1$,
the graphs $\Gamma$ and $\Gamma'$ are isomorphic if and only if $\dim X=\dim X'$ and $m'$ is equal to $m$ or $\dim X-m$.
However, for 
$$m\in \{1, \dim X-1\}\qquad\text{and}\qquad m'\in\{1, \dim X'-1\}$$ 
the graphs are isomorphic even if $\dim X\ne \dim X'$
(in this case, any two distinct vertices in each of these graphs are adjacent). 

Assume for simplicity that all $n_i$ are finite. 
Let $f$ be an automorphism of $\Gamma(\sigma,d)$.
By Lemma \ref{lemma1}, for any distinct $i,j\in I$ there are distinct $i',j'\in I$ such that $f$ 
provides a one-to-one correspondence between $(i,j)$-connected components and $(i',j')$-connected components.
Recall that the restriction of $\Gamma(\sigma,d)$ to an $(i,j)$-connected component
is isomorphic to the Grassmann graph formed by $n_i$-dimensional subspaces of a certain $(n_i+n_j)$-dimensional subspace of $H$.
Therefore, $f$ induces an isomorphism between this graph and 
the Grassmann graph of $n_{i'}$-dimensional subspaces of an $(n_{i'}+n_{j'})$-dimensional subspace of $H$.

If $n_i,n_j>1$, then 
$\{n_i, n_j\}=\{n_{i'},n_{j'}\}$.
This implies the existence of a permutation $\delta\in S(d)$ such that the automorphism $\delta f$ preserves the $(i,j)$-adjacency. 
The latter automorphism induces an automorphism of $\Gamma\bigl((\sigma,d)_{-i,+j}\bigr)$.
Applying this reduction recursively we obtain  an automorphism of a Grassmann graph.
This is one of the key methods used in \cite{PPZ}.

If $n_i=1$, then one of $n_{i'},n_{j'}$, say $n_{i'}$, also is equal to $1$. 
We cannot assert that $n_j=n_{j'}$ 
in this case
(see the above remark on isomorphisms of Grassmann graphs).

There is yet another reason. 
Suppose that $I=\{1,2,3\}$ and $n_1=n_2=n_3=1$.
Then every permutation on $I$ belongs to $S(d)$ and we can assume that $f$ preserves each type of adjacency 
and, consequently, induces an automorphism of $\Gamma\bigl((\sigma,d)_{-i,+j}\bigr)$ for any distinct $i,j\in I$.
Each $\Gamma\bigl((\sigma,d)_{-i,+j}\bigr)$ is isomorphic to the Grassmann graph formed by $1$-dimensional subspaces of ${\mathbb C}^3$
and every bijective transformation of the vertex set is a graph automorphism.

\end{document}